\documentclass[11pt]{amsart}
\usepackage{amssymb,mathrsfs,stmaryrd}
\usepackage{xcolor}
\newtheorem{theorem}{Theorem}
 
\newtheorem{proposition}[theorem]{Proposition} 
\newtheorem{corollary}[theorem]{Corollary} 
\theoremstyle{definition}

\newtheorem{conjecture}[theorem]{Conjecture}

\begin{document}
\title[Minimal Two-Spheres]{Minimal Two-Spheres and Manifolds with Positive Isotropic Curvature}

\author{Tsz-Kiu Aaron Chow}
\address{Department of Mathematics, Hong Kong University of Science and Technology, Hong Kong S.A.R., China}
\email{chowtka@ust.hk}

\author{Yipeng Wang}
\address{Department of Mathematics, Princeton University, Princeton, NJ 08544, USA}
\email{yipeng.wang@princeton.edu}

\thanks{We thank Otis Chodosh for his interest in this work and for pointing out the relation between positive isotropic curvature and positive \(4\)-intermediate curvature in dimension six. T.-K. A. C. is supported  by the Croucher Foundation Start-up Grant and the HKUST New Faculty Start-up Grant.}

\begin{abstract}
We improve the Micallef--Moore index estimate for harmonic two-spheres in $n$-manifolds with positive isotropic curvature to the sharp bound $n-3$. Combining with recent work, this completes the diffeomorphism classification of closed manifolds with positive isotropic curvature in the remaining dimensions five and six.
\end{abstract}

\maketitle

\section{Introduction}
Let $(M,g)$ be a Riemannian manifold of dimension $n\geq 4$. For $p\in M$, we define $T_p^{\mathbb{C}}M=T_pM\otimes _{\mathbb{R}}\mathbb{C}$ to be the complexified tangent space to $M$ at $p$. The inner product $g$ extends to a complex bilinear form $g:T_p^{\mathbb{C}}M\times T_p^{\mathbb{C}}M\to \mathbb{C}$, and the Riemann curvature tensor $R$ extends to a complex multilinear form $R:T_p^{\mathbb{C}}M\times T_p^{\mathbb{C}}M\times T_p^{\mathbb{C}}M\times T_p^{\mathbb{C}}M\to \mathbb{C}$. We say $(M,g)$ has positive isotropic curvature if and only if $R(\zeta,\eta,\bar\zeta,\bar\eta)>0$ for every $p\in M$ and all linearly independent $\zeta,\eta\in T_p^{\mathbb C}M$ satisfying $g(\zeta,\zeta)=g(\zeta,\eta)=g(\eta,\eta)=0$. Equivalently, $(M,g)$ has positive isotropic curvature if for every $p\in M$ and every orthonormal four-frame $\{e_1,e_2,e_3,e_4\}$, we have
\[
R_{1313}+R_{1414}+R_{2323}+R_{2424}-2R_{1234}>0,
\]
where $R_{ijk\ell}=R(e_i,e_j,e_k,e_{\ell})$.

We identify $S^2=\mathbb{R}^2\cup\{\infty\}$ via the stereographic projection. Let $f:S^2\to M$ be a non-constant harmonic map, i.e. $f$ is a non-constant critical point of the energy functional
\[
\mathscr{E}(f)=\frac{1}{2}\int_{S^2}\Big(\Big|\frac{\partial f}{\partial x}\Big|^2+\Big|\frac{\partial f}{\partial y}\Big|^2\Big)\,dx\,dy.
\]
In particular, this is equivalent to $D_{\frac{\partial}{\partial x}}\frac{\partial f}{\partial x}+D_{\frac{\partial}{\partial y}}\frac{\partial f}{\partial y}=0$. Moreover, we note that the classical Hopf differential argument implies $f$ is a branched minimal immersion.

Let $I:\Gamma(f^*(TM))\times \Gamma(f^*(TM))\to \mathbb{R}$ be the second variation of $\mathscr{E}$ given by
\begin{align*}
    I(V,V)&=\int_{S^2}\Big[|D_{\frac{\partial}{\partial x}}V|^2+|D_{\frac{\partial}{\partial y}}V|^2\Big]\,dx\,dy\\
    &-\int_{S^2}\Big[R\Big(\frac{\partial f}{\partial x},V,\frac{\partial f}{\partial x},V\Big)+R\Big(\frac{\partial f}{\partial y},V,\frac{\partial f}{\partial y},V\Big)\Big]\,dx\,dy.
\end{align*}
Let $\mathscr N\subset\Gamma(f^*(TM))$ be the negative eigenspace of the Jacobi
operator associated with $I$. The Morse index of $f$ is defined by $\dim_{\mathbb{R}}\mathscr N$. The main result of the manuscript is the following.
\begin{theorem}\label{thm}
    Let $(M,g)$ be a  Riemannian manifold of dimension $n\geq 4$ with positive isotropic curvature. Then, every non-constant harmonic map $f:S^2\to M$ has Morse index at least $n-3$. 
\end{theorem}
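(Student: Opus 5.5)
We follow the Micallef--Moore strategy, but sharpen the dimension count at its final step. Write $z=x+iy$ and extend $I$ complex-bilinearly to sections of $E=f^*(TM)\otimes\mathbb C$. For a section $s$ of $E$, integration by parts gives
\[
I(s,\bar s)=4\int_{S^2}\Big[\,|D_{\bar\partial}s|^2-R\big(s,\partial_z f,\bar s,\partial_{\bar z}f\big)\Big]\,dx\,dy ,
\]
up to normalizing constants. We pull $TM$ back to a holomorphic vector bundle over $\mathbb CP^1$ using the $\bar\partial$-operator $D_{\bar\partial}$. By Grothendieck's theorem it splits as $E\cong\mathcal O(d_1)\oplus\cdots\oplus\mathcal O(d_n)$. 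The bilinear form $g$ gives an isomorphism $E\cong E^*$, so the degrees are symmetric: $d_i\leftrightarrow -d_i$. Since $f$ is conformal and harmonic, $\partial_z f$ is a holomorphic isotropic section.

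Next we isolate the isotropic holomorphic sections. Let $\mathcal H$ be the space of holomorphic sections $s$ with $g(s,s)\equiv0$ and $g(s,\partial_zf)\equiv0$. For such $s$ the $\bar\partial$ term vanishes. Wherever $s$ and $\partial_z f$ are linearly independent, they span an isotropic 2-plane, so PIC makes the curvature integrand strictly positive. Hence $I(s,\bar s)<0$ unless $s$ is everywhere proportional to $\partial_zf$. Consequently $\mathrm{Re}\,s$ and $\mathrm{Im}\,s$ give negative directions, and the real index is at least $\dim_{\mathbb C}\mathcal H$ minus a correction. That correction accounts for real-linear versus complex-linear counting and for the multiples of $\partial_z f$, i.e.\ the sections $\phi\,\partial_z f$ coming from conformal reparametrization.

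The quadratic condition is not linear, so we use a subbundle trick. The bundle $E'=(\partial_zf)^{\perp}/\langle\partial_zf\rangle$ (after saturating at branch points) is a holomorphic orthogonal bundle of rank $n-2$. Its holomorphic sections carry the induced quadratic form. Since $g(s,s)$ for holomorphic $s$ is a holomorphic function on $S^2$, hence constant, the isotropy condition reduces to a single quadratic condition on the finite-dimensional space $H^0(E')$. On a space of dimension $N$, a nondegenerate-type quadratic form admits isotropic subspaces of complex dimension about $N/2$.

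Micallef--Moore estimate $N=h^0$ by Riemann--Roch. Self-duality gives $\sum d_i=0$, and $\deg\partial_zf\ge 2$ contributes a positive summand $\mathcal O(k)$, $k\ge2$, together with its dual $\mathcal O(-k)$. Then $h^0\ge\sum_{d_i\ge0}(d_i+1)$, and the isotropic count gives real index at least roughly $\lfloor (n-2)/2\rfloor$-type bounds. To reach $n-3$ we must do better.

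The sharpening is where the main difficulty lies. Instead of taking a single maximal isotropic subspace of holomorphic sections, we will use the symmetric splitting $E'=\bigoplus(\mathcal O(a_j)\oplus\mathcal O(-a_j))\oplus(\text{degree-}0\text{ trivial orthogonal part})$, with $a_j>0$. The pairing puts each $\mathcal O(a_j)$ in duality with $\mathcal O(-a_j)$, so $\mathcal O(a_j)$ is an isotropic subbundle contributing $a_j+1$ holomorphic isotropic sections. These are mutually isotropic, since the positive summands pair trivially among themselves. The trivial part of rank $r$ carries a constant orthogonal form with isotropic subspaces of complex dimension $\lfloor r/2\rfloor$. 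Both the real and imaginary parts of each section are negative directions, and they span a real space of twice the complex dimension, provided complex-linear independence lifts to real independence of $\mathrm{Re}$ and $\mathrm{Im}$ in the negative cone. For that we will use that $I$ is Hermitian-negative on the whole complex span, so the real span has index at least $2\dim_{\mathbb C}$ minus the dimension of the real subspace of the span. Summing gives roughly $2\sum(a_j+1)+2\lfloor r/2\rfloor$ with $2\sum_j 1 + r = n-2$, which already gives at least $n-3$.

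The obstacle is rigor in two places. First, the quotient $E'$ must be handled correctly so that lifts of isotropic sections of $E'$ to $E$ remain isotropic and orthogonal to $\partial_zf$. Second, we must exclude negative directions that degenerate to multiples of $\partial_zf$; we will handle this by showing such sections have zero image in $E'$. The parity loss of one in the trivial part is exactly what produces $n-3$ rather than $n-2$.
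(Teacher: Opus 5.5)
There is a genuine gap, at exactly the step meant to improve on Micallef--Moore. You claim that a complex subspace $W$ of holomorphic, totally isotropic sections with $I(V,\overline V)<0$ for $V\neq0$ gives real index at least $2\dim_{\mathbb C}W$. That claim is false.

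The form $(V,U)\mapsto I(V,\overline U)$ is the Hermitian extension of the real form $I$. Diagonalizing $I$ over $\mathbb R$ shows that its maximal negative definite complex subspaces have complex dimension equal to the real index, which is why the index equals $\dim_{\mathbb C}\mathscr N^{\mathbb C}$. So $W$ only gives index $\ge\dim_{\mathbb C}W$, and that is precisely the Micallef--Moore count.

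The identity $I(V,\overline V)=I(\mathrm{Re}\,V,\mathrm{Re}\,V)+I(\mathrm{Im}\,V,\mathrm{Im}\,V)$ makes only the sum negative, not each term. Doubling would require negativity on $W\oplus\overline W$, that is, control of the cross terms $I(s,t)$ for $s,t\in W$. These are integrals of $R(\partial_zf,s,\partial_{\bar z}f,t)+R(\partial_{\bar z}f,s,\partial_zf,t)$, which PIC does not control.

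The sharp example shows the doubled count fails. Take an equatorial $S^2\subset S^3\subset S^3\times S^1$. Here $E'$ is trivial of rank $2$, spanned by the unit normal $\nu$ in $S^3$ and the circle direction $e$. The space $W=\mathrm{span}_{\mathbb C}(\nu+ie)$ is negative definite. However, $\mathrm{Im}(\nu+ie)=e$ has $I(e,e)=0$, and the index is $1$, not the $2\lfloor r/2\rfloor=2$ your count predicts. More generally, for the equatorial sphere in $S^{n-1}\times S^1$ with $n$ even, your count gives $n-2$, exceeding the true index $n-3$.

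With the correct count your method yields $\sum_j(a_j+1)+\lfloor r/2\rfloor$. This falls short of $n-3$ whenever $r\ge3$. Insisting on a totally isotropic subspace inherently loses a factor of two on the trivial part.

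What is missing is the paper's key proposition, which needs only one isotropic vector rather than an isotropic subspace. Take a space $\mathscr H$ of holomorphic sections with $\dim_{\mathbb C}\mathscr H\ge n-2$ and no nonzero element that is a meromorphic multiple of $\partial_zf$; impose no isotropy.

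Suppose the index were at most $\dim_{\mathbb C}\mathscr H-2$. Then some two-dimensional subspace of $\mathscr H$ is $L^2$-orthogonal to $\mathscr N^{\mathbb C}$, so $I(V,\overline V)\ge0$ on it. Since $g$ is constant on pairs of holomorphic sections, and every quadratic form on $\mathbb C^2$ has a nonzero isotropic vector, this subspace contains $V\neq0$ with $g(V,V)=0$. PIC then forces $I(V,\overline V)\le0$, with equality only if $V=\psi\,\partial_zf$ for meromorphic $\psi$, which is excluded. Hence the index is at least $\dim_{\mathbb C}\mathscr H-1\ge n-3$.

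Your set-up can in fact supply $\mathscr H$. Since $E'$ is orthogonal, $c_1(E')=0$, so Riemann--Roch gives $h^0(E')\ge n-2$. The line bundle $\Lambda$ spanned by $\partial_zf$ has degree $\ge2$, so $H^1(\Lambda)=0$. Therefore $H^0(E')$ lifts to a subspace of $H^0(\Lambda^\perp)$ meeting $H^0(\Lambda)$ trivially, which is exactly what the argument needs. As written, though, your final bound rests on the false doubling step.
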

As a consequence of the classical Sacks--Uhlenbeck existence theory of harmonic maps \cite{SacksUhlenbeck1981}, we obtain the following improved vanishing result for homotopy groups of closed manifolds with positive isotropic curvature.
\begin{corollary}\label{cor:vanishing}
    Let $(M,g)$ be a closed Riemannian manifold of dimension $n\geq 4$ with positive isotropic curvature. Then $\pi_k(M)=0$ for all $k\in \{2,\cdots,n-2\}$.
\end{corollary}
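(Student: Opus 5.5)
We follow the Micallef--Moore strategy, sharpened at the counting step. Complexify: extend $I$ to a Hermitian form on $\Gamma(f^*T^{\mathbb C}M)$. With $z=x+iy$ and $\partial_z=\tfrac12(\partial_x-i\partial_y)$, the form becomes
\[
I(W,\bar W)=4\int_{S^2}\Big[|D_{\bar z}W|^2-R\big(W,\tfrac{\partial f}{\partial z},\bar W,\tfrac{\partial f}{\partial \bar z}\big)\Big]\,dx\,dy,
\]
up to an integration by parts. The index of the real form equals the complex dimension of a maximal negative definite subspace of this Hermitian form. So it suffices to produce a complex subspace $\mathscr V$ of sections with $\dim_{\mathbb C}\mathscr V\ge n-3$ on which $I<0$.

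\textbf{Holomorphic structure and isotropic sections.} The bundle $E=f^*T^{\mathbb C}M$ carries a holomorphic structure given by $D_{\bar z}$. Since $f$ is conformal and harmonic, $f_z$ is a holomorphic isotropic section. After filling in branch points, it spans an isotropic line subbundle $L$, with $\deg L\ge 2$. Its span with $\bar L$ is a rank-2 subbundle $T$ (the complexified tangent plane). By Grothendieck, $E\cong\bigoplus_i\mathcal O(d_i)$. For a holomorphic section $W$ the energy term vanishes, and if $W$ is isotropic with $W\wedge f_z\neq 0$ then
\[
I(W,\bar W)=-4\int R(W,f_z,\bar W,\bar f_z)<0
\]
by positive isotropic curvature, since $\mathrm{span}\{W,f_z\}$ is totally isotropic. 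The degeneracy locus is a discrete set, so it does not affect the strict inequality. For a complex subspace $\mathscr V$ of such sections we also need $I<0$ on all of $\mathscr V$, not just on each element. This is automatic if every $W\in\mathscr V$ is isotropic and independent of $f_z$ at generic points, which holds once $\mathscr V\cap \mathrm{span}(f_z)=0$ pointwise generically.

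\textbf{Counting.} Consider the normal-type bundle $N=L^\perp/L$ of rank $n-2$. It carries a holomorphic orthogonal structure, is self-dual, and so has splitting with degrees symmetric, summing to $0$. Micallef--Moore used Riemann--Roch to find $\ge \lfloor (n-2)/2\rfloor$ independent holomorphic sections, and lifted them to isotropic sections of $L^\perp$. To reach $n-3$ we instead count dimension. Holomorphic sections of $N$ form a space of dimension at least $\sum_{d_i\ge0}(d_i+1)$. Lift these to $L^\perp$, which is possible because $H^1(L)=0$ as $\deg L\ge 2>-2$; the lift is non-unique up to $H^0(L)$, and we quotient that out.

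Isotropy is a quadratic condition. The quadratic form $q$ on $H^0(N)$ takes values in $H^0(\mathcal O)=\mathbb C$, since a product of holomorphic sections is holomorphic and the pairing is constant. So the isotropic condition is one quadratic equation on $H^0(N)$. A complex vector space of dimension $m$ with a quadratic form contains a totally isotropic subspace of dimension $\ge\lfloor m/2\rfloor$; this alone is too weak. The key improvement is that we do not need full isotropy of $W$. We only need $I(W,\bar W)<0$, and the correct pointwise condition is that $\mathrm{span}(W,f_z)$ be isotropic, i.e.\ $g(W,W)=0$ together with $W\perp f_z$. Working in $L^\perp$, which has rank $n-1$, the sections of $L^\perp$ whose image in $N$ has $q=0$ can be supplemented by sections whose $q$ is nonzero. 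For those we would use a non-holomorphic perturbation $W+\phi f_z$ with $\phi$ chosen to kill $q$. Since $q(W+\phi f_z)=q(W)$ modulo $L$-terms, we would instead pair with $\bar L$-components, using $g(f_z,f_{\bar z})\neq0$: $W\mapsto W-\tfrac{g(W,W)}{2g(W,f_{\bar z})}f_{\bar z}$. This costs $D_{\bar z}$-energy that must be dominated by the curvature term.

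\textbf{Main obstacle.} The main obstacle is precisely this last step. Getting from roughly $(n-2)/2$ to $n-3$ requires showing that on a codimension-one subspace (the kernel of the constant quadratic form $q$ restricted to an $(n-2)$-dimensional space of sections, modulo $H^0(L)$), the Hermitian form $I$ is negative. This relies on positive isotropic curvature of complex two-planes that are only "partially" isotropic. First I would exploit that the space where $q\equiv0$ as a quadratic form has codimension one in the linear span only if $q$ has rank $\le 2$. Failing that, I would construct test sections of the form $W+\psi f_{\bar z}$, with $\psi$ solving a $\bar\partial$-equation, and check the sign using the PIC inequality together with degree considerations ($\deg L\ge2$) to absorb the error.
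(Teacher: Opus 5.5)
Your proposal has a genuine gap, and it is exactly the step you label the main obstacle: as written, nothing establishes that the index is at least $n-3$.

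The difficulty comes from requiring every element of $\mathscr V$ to be isotropic. That approach is capped at $\lfloor (n-2)/2\rfloor$, and the repair you propose does not work. For $W'=W-c\,f_{\bar z}$ with $c=g(W,W)/(2g(W,f_{\bar z}))$ you get $g(W',f_z)=-c\,g(f_{\bar z},f_z)$. This is nonzero wherever $g(W,W)\neq0$, since $g(f_z,f_{\bar z})>0$ away from branch points. So $\mathrm{span}\{W',f_z\}$ is no longer totally isotropic, and positive isotropic curvature gives no sign for the curvature term. That is the very condition you had just singled out as necessary. Three further problems:
\begin{itemize}
\item $W\mapsto W'$ is not linear, so it does not carry a subspace to a subspace.
\item Its denominator can vanish.
\item $W'$ is not holomorphic, so the term $4\int|D_{\bar z}W'|^2$ is left uncontrolled.
\end{itemize}
Note also that the zero set of $q$ is a cone, not a codimension-one subspace.

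The missing idea is that negativity on the null cone is already enough. You do not need a negative definite subspace made of isotropic sections.
\begin{itemize}
\item Take your own space $\mathscr H$, a complement of $H^0(L)$ in $H^0(L^\perp)$. Your count gives $\dim_{\mathbb C}\mathscr H=h^0(N)\ge n-2$. Every holomorphic section lies in $L^\perp$ anyway, since $g(f_z,W)\,dz$ is a holomorphic $1$-form on $S^2$.
\item Let $V\in\mathscr H$ be nonzero with $g(V,V)=0$. Then $V\wedge f_z\not\equiv0$, since otherwise $V\in H^0(L)$. Hence $I(V,\bar V)=-4\int R(f_z,V,f_{\bar z},\bar V)<0$.
\item Since $g$ is constant on pairs of holomorphic sections, every complex $2$-plane in $\mathscr H$ contains such a null $V$. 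So the Hermitian form $V\mapsto I(V,\bar V)$ is nonnegative on no $2$-plane of $\mathscr H$.
\item By Sylvester's law it is therefore negative definite on a subspace of $\mathscr H$ of dimension $\dim\mathscr H-1\ge n-3$. The elements of that subspace are in general \emph{not} isotropic, which is why your pointwise requirement was the wrong target.
\end{itemize}

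This is the paper's argument, phrased there by contradiction. If the index were at most $\dim\mathscr H-2$, two independent sections in $\mathscr H$ would be $L^2$-orthogonal to $\mathscr N^{\mathbb C}$. An isotropic combination $V$ of them would then satisfy both $I(V,\bar V)\ge 0$ and $I(V,\bar V)<0$. The paper takes $\mathscr H=H^0(E)$ for a subbundle $E$ of rank $\ge n-2$ with $c_1(E)=0$ and $f_z\notin\Gamma(E)$, but your space works equally well.

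Finally, your proposal never states the step that turns the index bound into the corollary. If $\pi_k(M)\neq0$ for some $k\ge2$, Sacks--Uhlenbeck theory (as in Schoen--Yau, Chapter VII, Theorem 2) gives a non-constant harmonic sphere of index at most $k-2$. Index $\ge n-3$ then forces $k\ge n-1$.
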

Indeed, if $\pi_k(M)\neq0$ for $k\geq2$, then by \cite[Chapter VII, Theorem 2]{SchoenYau1997}, there exists a non-constant harmonic map $f:S^2\to M$ with Morse index at most $k-2$, and hence Theorem \ref{thm} implies $k\geq n-1$.
In their classical work \cite{MM1988}, Micallef and Moore showed that under the assumptions of Theorem \ref{thm}, every non-constant harmonic map $f:S^2\to M$ has Morse index at least $\lfloor\frac{n-2}{2}\rfloor$, and consequently, every closed Riemannian manifold of dimension $n\geq 4$ with positive isotropic curvature has $\pi_i(M)=0$ for all $i\in \{2,\cdots,\lfloor\frac{n}{2}\rfloor\}$. As a consequence, they proved that every closed simply connected Riemannian manifold with positive isotropic curvature is homeomorphic to a sphere.

We note that for $n\geq 4$, the product metric on $S^{n-1}\times S^1$ has positive isotropic curvature, and an equatorial two-sphere contained in $S^{n-1}$ gives a non-constant harmonic map $f:S^2\to S^{n-1}\times S^1$ with Morse index $n-3$. Hence both Theorem \ref{thm} and Corollary \ref{cor:vanishing} are sharp.

We now discuss further topological consequences of Corollary \ref{cor:vanishing}. There are two closely related long-standing conjectures of Gromov and Schoen concerning closed manifolds with positive isotropic curvature:
\begin{conjecture}[Gromov, Schoen]\label{conj:GS}
Let $(M,g)$ be a closed connected Riemannian manifold of dimension $n$ with positive isotropic curvature. Then $\pi_1(M)$ is virtually free, and a finite cover of $M$ is diffeomorphic to either $S^n$ or to a connected sum of finitely many copies of $S^{n-1}\times S^1$.
\end{conjecture}
Combining Corollary \ref{cor:vanishing} with the work of Chodosh--Li--Liokumovich \cite{CLL2023} in dimension five and Mazurowski--Wang--Yao \cite{MWY2026} in dimension six gives Conjecture \ref{conj:GS} in these dimensions with diffeomorphism replaced by homeomorphism. We also use Proposition \ref{prop:intermediate}, relating positive isotropic curvature to the positive intermediate curvature condition introduced by Brendle--Hirsch--Johne \cite{BHJ2024}. Kirby--Siebenmann smoothing theory \cite{KS1977}, together with the work of Friedl--Nagel--Orson--Powell \cite{FNOP2025}, then upgrades the homeomorphism to a diffeomorphism. The five-dimensional case is also summarized in \cite[Theorem~2.1(ii)]{DP2026}.
\begin{corollary}
    Let $n\in \{5,6\}$ and let $(M,g)$ be a closed connected Riemannian manifold of dimension $n$ with positive isotropic curvature. Then $\pi_1(M)$ is virtually free and a finite cover of $M$ is diffeomorphic to either $S^n$ or to a
connected sum of finitely many $S^{n-1}\times S^1$.
\end{corollary}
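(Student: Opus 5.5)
The plan is to assemble this from Corollary \ref{cor:vanishing} and the cited classification results. I would proceed in three stages: homotopy-theoretic input, a topological classification, and smoothing. Let $n\in\{5,6\}$ and let $(M,g)$ be closed and connected with positive isotropic curvature. Since positive isotropic curvature is a pointwise condition, it lifts to every finite cover of $M$ and to the universal cover $\widetilde M$. By Corollary \ref{cor:vanishing} we get $\pi_2(M)=\pi_3(M)=0$ when $n=5$, and $\pi_2(M)=\pi_3(M)=\pi_4(M)=0$ when $n=6$. Moreover, positive isotropic curvature implies positive scalar curvature, since scalar curvature is a positive multiple of the average of the isotropic expressions over orthonormal four-frames. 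This gives the positivity hypothesis needed to invoke the classification results.

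For $n=5$, I would apply Chodosh--Li--Liokumovich \cite{CLL2023}. A closed $5$-manifold with positive scalar curvature and $\pi_2=\pi_3=0$ has virtually free fundamental group, and a finite cover of it is homotopy equivalent to $S^5$ or to a connected sum of copies of $S^4\times S^1$.

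For $n=6$, positive scalar curvature alone is not enough. Here I would use Proposition \ref{prop:intermediate}, which I expect says that in dimension six positive isotropic curvature implies the positive $4$-intermediate curvature of Brendle--Hirsch--Johne \cite{BHJ2024}. I would then check that this curvature condition, together with $\pi_2=\pi_3=\pi_4=0$, is exactly the hypothesis of Mazurowski--Wang--Yao \cite{MWY2026}. That yields the same conclusion: $\pi_1$ is virtually free and a finite cover is homotopy equivalent to $S^6$ or to $\#_k(S^5\times S^1)$.

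The next step upgrades homotopy equivalence to homeomorphism. For the sphere case this is the topological Poincaré conjecture in dimensions $5$ and $6$. For $\#_k(S^{n-1}\times S^1)$, the fundamental group is free, so it satisfies the Farrell--Jones conjectures and its Whitehead group vanishes. By surgery theory the topological structure set reduces to normal invariants, and these are controlled by $[\#_k(S^{n-1}\times S^1),G/TOP]$. I expect to quote Friedl--Nagel--Orson--Powell \cite{FNOP2025} for the statement that every closed manifold homotopy equivalent to $\#_k(S^{n-1}\times S^1)$ is homeomorphic to it, possibly after passing to a further finite cover. This topological rigidity step is where I expect the main subtlety: in general such homotopy equivalences need not be homeomorphisms, and one must confirm that the cited result covers $n=5,6$ and arbitrary $k$.

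Finally, I would pass from homeomorphism to diffeomorphism using Kirby--Siebenmann smoothing theory \cite{KS1977}. In dimensions $n\ge5$, concordance (hence isotopy) classes of smooth structures on a topological manifold $N$ correspond to $[N,TOP/O]$. Since $PL/O$ is $6$-connected and $TOP/PL\simeq K(\mathbb Z/2,3)$, in dimensions at most $6$ we get
\[
[N,TOP/O]\cong H^3(N;\mathbb Z/2).
\]
For $N=S^n$ and $N=\#_k(S^{n-1}\times S^1)$ with $n\in\{5,6\}$, we have $H^3(N;\mathbb Z/2)=0$. Hence the smooth structure is unique, and the homeomorphism from the previous step can be replaced by a diffeomorphism, which completes the proof.
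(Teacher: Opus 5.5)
Your outline follows the paper's proof step for step. Corollary~\ref{cor:vanishing} and Proposition~\ref{prop:intermediate} feed into \cite[Theorem~1]{CLL2023} and \cite[Theorem~7(2)]{MWY2026}; for $n=5$, positive $4$-intermediate curvature is just positive scalar curvature, as you say. This produces a finite cover $\hat M$ homotopy equivalent to $S^n$ or to $\#_k(S^{n-1}\times S^1)$. Your smoothing step via $[N,TOP/O]\cong H^3(N;\mathbb Z/2)$, using that $PL/O$ is $6$-connected, is equivalent to the paper's two-step argument. That argument is Kirby--Siebenmann for $TOP$ versus $PL$, followed by uniqueness of smoothings of a PL manifold in dimensions $\le 6$, which is the only thing \cite[Theorem~3.5(2)]{FNOP2025} is cited for.

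The step you have not actually supplied is the upgrade from homotopy equivalence to homeomorphism when $\hat M\simeq\#_k(S^{n-1}\times S^1)$ with $k\ge 1$. Friedl--Nagel--Orson--Powell is not the source for this rigidity statement; in the paper that reference plays no role here.

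Your surgery sketch does not close the gap either. The normal invariant set $[\#_k(S^{n-1}\times S^1),G/TOP]$ is not trivial: rationally it is $\bigoplus_i H^{4i}(\,\cdot\,;\mathbb Q)$, which is $\mathbb Q^k$ for $n=5$. So saying the structure set ``reduces to normal invariants'' proves nothing yet. You would still have to analyze the surgery obstruction map to $L_n(\mathbb Z F_k)$ and the action of $L_{n+1}(\mathbb Z F_k)$ on the structure set, for example via Cappell's splitting along the $S^{n-1}$ factors.

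The paper fills this step by citing the rigidity theorem of Gadgil--Seshadri \cite[Theorem~1.3]{GS2009}. It applies to $\hat M$ once one observes two facts. First, $\hat M$ is orientable, being a closed manifold homotopy equivalent to an orientable one; your proposal never mentions this hypothesis. Second, $\hat M$ has free fundamental group. With that citation in place of your heuristic, the rest of your argument goes through.
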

\begin{proof}
       By Corollary \ref{cor:vanishing}, we have $\pi_2(M)=\cdots=\pi_{n-2}(M)=0$. On the other hand, Proposition \ref{prop:intermediate} implies that $(M,g)$ has positive $4$-intermediate curvature. Thus \cite[Theorem 1]{CLL2023} for $n=5$ and \cite[Theorem 7 (2)]{MWY2026} for $n=6$ imply that $M$ admits a finite cover $\hat M$ homotopy equivalent to $S^n$ or to a connected sum of finitely many copies of $S^{n-1}\times S^1$. In particular, $\hat{M}$ is orientable and $\pi_1(\hat{M})$ is free, so $\pi_1(M)$ is virtually free. By \cite[Theorem 1.3]{GS2009}, $\hat{M}$ is homeomorphic to $S^n$ or to a connected sum of finitely many copies of $S^{n-1}\times S^1$.
       
       Moreover, we have $H^3(\hat{M};\mathbb{Z}/2)=0$. Therefore, the smoothing theory of Kirby--Siebenmann \cite{KS1977} implies  that the PL structure transported across the homeomorphism is isotopic to the standard PL structure. Since smooth structures compatible with a fixed PL structure are unique up to isotopy in dimensions at most six \cite[Theorem~3.5(2)]{FNOP2025}, the transported smooth structure is isotopic to the standard one. Thus $\hat{M}$ is diffeomorphic to $S^n$ or to a connected sum of finitely many copies of $S^{n-1}\times S^1$. This completes the proof.
\end{proof}
Finally, we recall some classification results in dimensions \(n\neq5,6\). Hamilton's seminal work on four-manifolds with positive isotropic curvature \cite{Hamilton1997}, together with subsequent work of Chen--Zhu and Chen--Tang--Zhu \cite{ChenZhu2006,CTZ2012}, led to the complete classification in dimension four, which in particular implies Conjecture \ref{conj:GS} for $n=4$. In higher dimensions, Brendle developed a Ricci flow with surgery theory for manifolds with positive isotropic curvature in dimensions \(n\geq12\) \cite{Brendle2019}, resolving Conjecture \ref{conj:GS} under a mild assumption. Brendle's theorem was recently extended by Chen \cite{Chen2026} to \(n\in \{9,10,11\}\) and very recently by Cho \cite{Cho2026} to \(n\in\{7,8\}\). We also refer to the work of Huang \cite{Huang2025} on removing Brendle's additional assumption in higher dimensions.
\subsection*{Disclosure of AI tools.}
The manuscript was written by the authors. ChatGPT (GPT-5.6 Sol) was used only to suggest the relevance of the smoothing result of \cite{KS1977, FNOP2025, DP2026}. All mathematical arguments, verification, and presentation are the responsibility of the authors.
\section{Proof of Theorem \ref{thm}}
In this section we prove Theorem \ref{thm}. We follow the notation of \cite[\S1.4]{Brendle2010}. Throughout this section, let $f:S^2\to M$ be a non-constant harmonic map. We write 
\begin{align*}
    &\frac{\partial f}{\partial z}=\frac{1}{2}(\frac{\partial f}{\partial x}-i\frac{\partial f}{\partial y}),\\
    &\frac{\partial f}{\partial\bar z}=\frac{1}{2}(\frac{\partial f}{\partial x}+i\frac{\partial f}{\partial y})
\end{align*}
as sections of $f^*(T^{\mathbb{C}}M)$. In particular, since $f$ is harmonic, we have $D_{\frac{\partial}{\partial \bar z}}\frac{\partial f}{\partial z}=0$ and  $\frac{\partial f}{\partial z}$ is a holomorphic section of $f^*(T^{\mathbb{C}}M)$, and $g(\frac{\partial f}{\partial z},\frac{\partial f}{\partial z})=0$.

For every section $V\in \Gamma(f^*(T^{\mathbb{C}}M))$, we write
\begin{align*}
    &D_{\frac{\partial}{\partial z}}V=\frac{1}{2}\big(D_{\frac{\partial}{\partial x}}V-i\,D_{\frac{\partial}{\partial y}}V\big),\\
    &D_{\frac{\partial}{\partial\bar z}}V=\frac{1}{2}\big(D_{\frac{\partial}{\partial x}}V+i\,D_{\frac{\partial}{\partial y}}V\big).
\end{align*}
Next, following the argument of Micallef--Moore \cite{MM1988} (see also \cite[Proposition 1.16]{Brendle2010}), the index form extends complex bilinearly to $I:\Gamma(f^*(T^{\mathbb C}M))\times \Gamma(f^*(T^{\mathbb C}M))\to \mathbb{C}$ and satisfies
\[
I(V,\overline{V})=4\int_{S^2}g(D_{\frac{\partial}{\partial\bar z}}V,D_{\frac{\partial}{\partial z}}\overline{V})\,dx\,dy-4\int_{S^2}R\Big(\frac{\partial f}{\partial z},V,\frac{\partial f}{\partial\bar z},\overline{V}\Big)\,dx\,dy
\]
Let $\mathscr N^{\mathbb{C}}\subset\Gamma(f^*(T^{\mathbb C}M))$ be the
complexification of $\mathscr N$. In particular, the Morse index of $f$ is equal to $\dim_{\mathbb{C}}\mathscr N^{\mathbb{C}}$.
\begin{proposition}\label{prop}
Let $(M,g)$ be a Riemannian manifold of dimension $n\geq 4$ with positive isotropic curvature. Let $E\subset f^*(T^{\mathbb{C}}M)$ be a holomorphic subbundle, and let $\mathscr H\subset \Gamma(E)$ be a complex subspace of holomorphic sections such that $\frac{\partial f}{\partial z}\notin \Gamma(E)$. Then, the Morse index of $f$ is at least $\dim_{\mathbb C}\mathscr H-1$.
\end{proposition}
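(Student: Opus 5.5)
Since $\mathscr H$ consists of holomorphic sections, for $V\in\mathscr H$ the first term of $I(V,\overline V)$ vanishes identically, because $D_{\frac{\partial}{\partial\bar z}}V=0$. Hence
\[
I(V,\overline V)=-4\int_{S^2}R\Big(\frac{\partial f}{\partial z},V,\frac{\partial f}{\partial\bar z},\overline V\Big)\,dx\,dy .
\]
The plan is to find a subspace $\mathscr H'\subset\mathscr H$ of complex codimension at most one on which this form is negative definite. A standard linear-algebra argument then gives the bound. The Hermitian form $V\mapsto I(V,\overline V)$ is the complexification of the real index form. If $\dim_{\mathbb C}\mathscr N^{\mathbb C}<\dim_{\mathbb C}\mathscr H'$, some nonzero $V\in\mathscr H'$ would be $I$-orthogonal to $\mathscr N^{\mathbb C}$, which forces $I(V,\overline V)\ge 0$, a contradiction. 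Therefore the index is at least $\dim\mathscr H-1$.

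To get negativity I use positive isotropic curvature pointwise. Set $\zeta=\frac{\partial f}{\partial z}$, which is isotropic. The integrand $R(\zeta,V,\bar\zeta,\overline V)$ is strictly positive wherever $\zeta\neq 0$ and $V$ is isotropic with $g(\zeta,V)=0$ and $V\wedge\zeta\neq0$. It is also nonnegative if $V$ is a multiple of $\zeta$, where it vanishes. So I impose the conditions $g(V,V)=0$ and $g(V,\zeta)=0$.

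\begin{itemize}
\item The condition $g(V,\zeta)=0$ is handled as follows. The function $g(V,\zeta)\,dz$ is a holomorphic $1$-form on $S^2$, because both $V$ and $\zeta$ are holomorphic and the Hopf-type argument applies, as for $g(\zeta,\zeta)=0$. Since $S^2$ carries no nonzero holomorphic $1$-forms, it vanishes automatically. More carefully, $g(V,\zeta)$ transforms like a section of $K_{S^2}$, with $V$ a section of the pulled-back bundle, so holomorphicity plus globality forces $g(V,\zeta)\equiv0$.
\item The condition $g(V,V)=0$ is the main obstacle. The function $g(V,V)$ is a holomorphic function on $S^2$, hence a constant. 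So the map $V\mapsto g(V,V)$ is a single complex quadratic form on $\mathscr H$. I would like a codimension-one subspace on which it vanishes identically, which is not automatic for a quadratic form.
\end{itemize}

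My first attempt at the obstacle is to avoid needing $g(V,V)=0$ exactly. I would exploit the decomposition of curvature used in Micallef--Moore: $R(\zeta,V,\bar\zeta,\overline V)$ with $\zeta$ isotropic and $V\perp\zeta$ can be written in terms of isotropic-curvature expressions plus a term involving $|g(V,V)|$. Alternatively, I would restrict to the codimension-$1$ subspace $\{V: g(V,V_0)=0\}$ for a suitable $V_0$, chosen via the symmetric bilinear form $B(V,W)=g(V,W)$, which is constant on $\mathscr H\times\mathscr H$.

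Concretely, I would diagonalize $B$ on $\mathscr H$. If $B$ has a totally isotropic subspace of codimension at most one, I am done. Otherwise I would pair $V$ with $\overline V$-type combinations, as in the Micallef--Moore trick of taking $\mathrm{Re}$ and $\mathrm{Im}$ parts, to produce isotropic two-frames $\{\zeta,V\}$ at each point. This is where I expect the real work to lie.

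Once $V$ is pointwise isotropic and orthogonal to $\zeta$, I need $V\wedge\zeta\neq0$ on an open set so that the integral is strictly negative. This follows from $\zeta\notin\Gamma(E)$: the holomorphic sections $V$ and $\zeta$ cannot be everywhere proportional unless $V$ is a meromorphic multiple of $\zeta$. That case is excluded, possibly after one further codimension-one restriction that is absorbed into the same count, because $\zeta\notin\Gamma(E)$ and $E$ is a subbundle.
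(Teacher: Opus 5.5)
\textbf{There is a genuine gap, and it is exactly the step you flagged as the main obstacle.}

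Your plan needs a linear subspace $\mathscr H'\subset\mathscr H$ of codimension at most one on which $V\mapsto I(V,\overline V)$ is negative definite. Through your curvature argument, this amounts to the constant bilinear form $B(V,W)=g(V,W)$ being totally isotropic on $\mathscr H'$. That fails in general. A nondegenerate complex quadratic form on $\mathbb C^k$ has maximal totally isotropic subspaces of dimension only $\lfloor k/2\rfloor$, and an isotropic-subspace argument therefore only yields a bound of order $\lfloor\frac{n-2}{2}\rfloor$, as in Micallef--Moore. Nothing forces $I(V,\overline V)<0$ for non-isotropic holomorphic $V$, so negative definiteness on a codimension-one subspace of $\mathscr H$ is not available. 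Your fallbacks (a curvature decomposition with a $|g(V,V)|$ term, or taking real and imaginary parts) are not specified and do not supply it.

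\textbf{The missing idea: you need isotropic vectors, not an isotropic subspace.} Argue by contradiction.
\begin{itemize}
\item If the index were at most $\dim_{\mathbb C}\mathscr H-2$, the part of $\mathscr H$ that is $L^2$-orthogonal to $\mathscr N^{\mathbb C}$ would have complex dimension at least $2$. Pick independent $V_1,V_2$ in it.
\item Since each $g(V_i,V_j)$ is constant, $g(sV_1+tV_2,sV_1+tV_2)$ is a binary quadratic form in $(s,t)\in\mathbb C^2$. Over $\mathbb C$ such a form always has a nontrivial zero.
\item The resulting $V\neq0$ satisfies $g(V,V)\equiv0$. It also satisfies $g(V,\frac{\partial f}{\partial z})\equiv0$; your holomorphic $1$-form argument for this is fine.
\item Orthogonality to $\mathscr N^{\mathbb C}$ gives $I(V,\overline V)\ge0$. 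Positive isotropic curvature gives $I(V,\overline V)\le0$, with equality only if $V$ and $\frac{\partial f}{\partial z}$ are pointwise dependent.
\end{itemize}
In other words, the zero cone of $B$ meets every $2$-plane, which is all the argument needs.

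Your last step is also off. In the equality case $V=\psi\,\frac{\partial f}{\partial z}$ with $\psi$ meromorphic. Because $E$ is a subbundle, $\psi\not\equiv0$ would force $\frac{\partial f}{\partial z}\in\Gamma(E)$, which is excluded. So no ``further codimension-one restriction'' is needed at the end, and imposing one would lose a dimension in your count.
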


\begin{proof}
Suppose, to the contrary, that the Morse index of $f$ is at most $\dim_{\mathbb C}\mathscr H-2$. Then there exist two linearly independent sections $V_1,V_2\in \mathscr{H}$ which are $L^2$-orthogonal to $\mathscr N^{\mathbb{C}}$. On the other hand, for $i,j\in \{1,2\}$, we know that $g(V_i,V_j)$ is a holomorphic function on $S^2$, hence a constant. This implies there exists some $s\in \mathbb{C}$, $t\in \mathbb{C}$ with $(s,t)\neq (0,0)$, such that the linear combination $V=s\,V_1+t\,V_2$ satisfies $g(V,V)=0$ at each point of $S^2$. In particular, $V\in\mathscr{H}$ and $V$ is $L^2$-orthogonal to $\mathscr N^{\mathbb{C}}$. Therefore, we must have $I(V,\overline{V})\geq 0$.

By \cite[Proposition 1.17]{Brendle2010}, we have $g(\frac{\partial f}{\partial z},V)=g(\frac{\partial f}{\partial z},\frac{\partial f}{\partial z})=0$. Together with $g(V,V)=0$, the condition that $(M,g)$ has positive isotropic curvature gives $R(\frac{\partial f}{\partial z},V,\frac{\partial f}{\partial\bar z},\overline V)\ge0$, with equality precisely when $\frac{\partial f}{\partial z}$ and $V$ are linearly dependent. This implies $I(V,\overline{V})\leq  0$. Since $I(V,\overline{V})\geq 0$, we must have $\frac{\partial f}{\partial z}$ and $V$ are linearly dependent. As in \cite[Theorem 1.19]{Brendle2010}, we have $V=\psi\,\frac{\partial f}{\partial z}$ for some meromorphic function $\psi:S^2\to\mathbb{C}$. Since $V\in\Gamma(E)$ whereas $\frac{\partial f}{\partial z}\notin\Gamma(E)$, this forces $\psi\equiv0$, contradicting $V\ne0$. 
\end{proof}
We may now complete the proof of Theorem \ref{thm}. As in \cite[Proposition 1.18]{Brendle2010}, there exists a holomorphic subbundle $E\subset f^*(T^{\mathbb C}M)$ such that $\text{\rm rank}_{\mathbb{C}}E\geq n-2$, $c_1(E)=0$ and $\frac{\partial f}{\partial z}\notin\Gamma(E)$. Let $\mathscr H$ be the complex vector space of holomorphic sections of $E$. In partiuclar, the Riemann--Roch theorem implies $\dim_{\mathbb{C}}\mathscr{H}=\text{\rm rank}_{\mathbb{C}}E\mathbin{+}\langle c_1(E),[S^2]\rangle+\dim_{\mathbb{C}}H^1(S^2,E)\geq n-2$, and the Theorem now follows directly from Proposition \ref{prop}.
\appendix

\section{Positive Intermediate Curvature}
Let $(M,g)$ be a Riemannian manifold of dimension $n\geq 4$. We recall the notion of positive intermediate curvature introduced by Brendle--Hirsch--Johne. We fix a positive integer $m\in \{1,\cdots,n-1\}$. We say $(M,g)$ has positive $m$-intermediate curvature if for every $p\in M$, and every orthonormal $n$-frame $\{e_1,\cdots,e_n\}$ at $p$, we have $\sum_{i=1}^m\sum_{j=i+1}^nR_{ijij}>0$. In particular, positive $(n-1)$-intermediate curvature is equivalent to positive scalar curvature.
\begin{proposition}\label{prop:intermediate}
Let $(M,g)$ be a Riemannian manifold of dimension $n\geq 4$ with positive isotropic curvature. Then $(M,g)$ has positive $m$-intermediate curvature for every $m\in \{3,\cdots, n-1\}$.     
\end{proposition}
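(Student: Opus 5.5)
The plan is to reduce the statement to a four-index inequality that follows from positive isotropic curvature, and then to write the $m$-intermediate curvature sum as a positive combination of such four-index quantities.

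\textbf{Step 1: the four-index inequality.} Fix $p$ and an orthonormal frame $\{e_1,\dots,e_n\}$, and write $K_{ab}=R_{abab}$. For distinct indices $i,j,k,\ell$, apply the positive isotropic curvature condition to the four-frames $\{e_i,e_j,e_k,e_\ell\}$ and $\{e_i,e_j,e_k,-e_\ell\}$. The term $R_{ijk\ell}$ changes sign between the two, so averaging gives
\[
Q(i,j;k,\ell):=K_{ik}+K_{i\ell}+K_{jk}+K_{j\ell}>0 .
\]

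\textbf{Step 2: three families of sums.} Put $A=\{1,\dots,m\}$ and $B=\{m+1,\dots,n\}$, so $|B|=n-m\ge1$. The target is
\[
S_m=\Sigma_{\rm in}+\Sigma_{\rm cr},\qquad \Sigma_{\rm in}=\sum_{a<b\in A}K_{ab},\qquad \Sigma_{\rm cr}=\sum_{a\in A,\ \ell\in B}K_{a\ell}.
\]
I will sum $Q$ over three families of index choices, each of which only involves terms appearing in $S_m$.
\begin{enumerate}
\item[(F1)] Take $\{i,j\}\subset A$, $k\in A\setminus\{i,j\}$ and $\ell\in B$. This requires $m\ge3$. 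A count gives
\[
\sum Q=2(m-2)(n-m)\,\Sigma_{\rm in}+(m-1)(m-2)\,\Sigma_{\rm cr}.
\]
Indeed, for fixed $\ell$, an inside pair $\{a,b\}$ occurs when one of $a,b$ is $k$ and the other lies in $\{i,j\}$. This leaves $m-2$ choices for the third element and two orderings, giving $2(m-2)$. A cross pair $\{a,\ell\}$ occurs when $a\in\{i,j\}$, with $(m-1)(m-2)$ choices for the partner of $a$ in $\{i,j\}$ and for $k$.
\item[(F2)] Take $\{i,j\}\subset A$ and $\{k,\ell\}\subset B$. This requires $n-m\ge2$, and the sum is a positive multiple of $\Sigma_{\rm cr}$.
\item[(F3)] Take all four indices in $A$. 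This requires $m\ge4$, and by symmetry the sum is a positive multiple of $\Sigma_{\rm in}$.
\end{enumerate}
Every sum here is a sum of positive quantities, so each family yields a strictly positive expression.

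\textbf{Step 3: correcting the ratio.} Family (F1) has the ratio $2(n-m):(m-1)$ between $\Sigma_{\rm in}$ and $\Sigma_{\rm cr}$, and I must correct this to $1:1$ using a nonnegative multiple of (F2) or (F3).
\begin{itemize}
\item If $2(n-m)=m-1$, then (F1) alone is a positive multiple of $S_m$.
\item If $2(n-m)>m-1$, cross terms are missing and I add a suitable positive multiple of (F2). This needs $n-m\ge2$. If instead $n-m=1$, the inequality $2>m-1$ forces $m\le 2$, which is excluded. So $m=3$ with $n-m=1$ always falls in the equality case.
\item If $2(n-m)<m-1$, inside terms are missing and I add a multiple of (F3). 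This needs $m\ge4$, which holds because $n-m\ge1$ gives $m-1>2$, so $m>3$.
\end{itemize}
In every case $S_m$ equals a positive combination of strictly positive quantities, hence $S_m>0$.

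The main obstacle is the bookkeeping in Step 2: getting the coefficients of (F1) right, and checking that the availability conditions for (F2) and (F3) line up with the two inequality cases in Step 3. The curvature input is only Step 1.
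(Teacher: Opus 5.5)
Your argument is correct. It uses the same curvature input as the paper, namely the four-index inequality obtained by averaging the PIC condition over $e_\ell\mapsto -e_\ell$, but it organizes the combinatorics differently.

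The paper splits into three cases:
\begin{itemize}
\item $m=n-1$ is treated as positive scalar curvature, by summing over all four-frames.
\item For $4\le m\le n-2$ it uses only your families (F3) and (F2). This gives the stronger conclusion that $\Sigma_{\mathrm{in}}>0$ and $\Sigma_{\mathrm{cr}}>0$ separately.
\item Only for $m=3$, $n\ge 5$, where (F3) is empty, does it use the mixed family. Its quadruples $(1,2,3,k)$, $(1,3,2,k)$, $(1,k,2,3)$ for $k\ge 4$ are exactly the members of your (F1) at $m=3$, up to the symmetry $Q(i,j;k,\ell)=Q(k,\ell;i,j)$. It then adds (F2) with coefficient $1$, which is what your balancing constant $c=(m-2)\bigl(2(n-m)-(m-1)\bigr)/\bigl((m-1)(n-m-1)\bigr)$ gives at $m=3$.
\end{itemize}

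You instead take (F1) as the base for every $m$ and correct the ratio with (F2) or (F3) according to the sign of $2(n-m)-(m-1)$. Your coefficient count for (F1) and your checks that (F2) or (F3) is available in each case are right.

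Your route buys uniformity: it covers $m=n-1$ without a separate scalar-curvature step, and $n=4$, $m=3$ lands in the balanced case. The paper's route needs no balancing constants in the main range and yields separate positivity of the inside and cross sums there.
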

\begin{proof}
    Let us fix some $p\in M$ and an orthonormal frame $\{e_1,\cdots,e_n\}$ at $p$. For every four distinct indices $i,j,k,\ell$, applying the positive isotropic curvature condition to the four-frames $\{e_i,e_j,e_k,e_{\ell}\}$ and $\{e_i,e_j,e_k,-e_{\ell}\}$ gives
    \begin{equation}\label{eqn:four-curvatures}
    R_{ikik}+R_{i\ell i\ell}+R_{jkjk}+R_{j\ell j\ell}>0.
    \end{equation}
    Clearly, $(M,g)$ has positive scalar curvature by summing over \eqref{eqn:four-curvatures} over every orthonormal four-frames. Hence, we fix some positive integer $m\in \{3,\cdots,n-2\}$. We now consider two cases:

    \textit{Case 1.} Suppose $4\leq m\leq n-2$. Then for every $1\leq i<j<k<\ell\leq m$, applying \eqref{eqn:four-curvatures} to the indices $(i,j,k,\ell)$, $(i,k,j,\ell)$ and $(i,\ell,j,k)$ gives
    \[
    2(R_{ijij}+R_{ikik}+R_{i\ell i\ell}+R_{jkjk}+R_{j\ell j\ell}+R_{k\ell k\ell})>0.
    \]
    Summing over every such $1\leq i<j<k<\ell\leq m$ gives
    \[
        (m-2)(m-3)\sum_{1\leq i<j\leq m}R_{ijij}>0.
    \]
    Moreover, for every $1\leq i<j\leq m$ and $m<k<\ell\leq n$, summing over \eqref{eqn:four-curvatures} for the indices $\{i,j,k,\ell\}$ gives
    \[
        (m-1)(n-m-1)\sum_{1\leq i\leq m,\,m<k\leq n}R_{ikik}>0.
    \]
    The claim then follows from the fact that  
    \[\sum_{i=1}^m\sum_{j=i+1}^nR_{ijij}=\sum_{1\leq i<j\leq m}R_{ijij}+\sum_{1\leq i\leq m,\,m<k\leq n}R_{ikik}.
    \]
    
    \textit{Case 2.} Suppose $m=3$. Then for every $k\in \{4,\cdots,n\}$, applying \eqref{eqn:four-curvatures} to the indices $(1,2,3,k)$, $(1,3,2,k)$ and $(1,k,2,3)$ gives
    \[
    2(R_{1212}+R_{1313}+R_{2323}+R_{1k1k}+R_{2k2k}+R_{3k3k})>0.
    \]
    Summing over $4\leq k\leq n$ gives $$2(n-3)(R_{1212}+R_{1313}+R_{2323})+2\sum_{k=4}^n(R_{1k1k}+R_{2k2k}+R_{3k3k})>0.$$ 
    Moreover, adding \eqref{eqn:four-curvatures} for all $1\leq i<j\leq 3$ and $4\leq k<\ell\leq n$ gives
    \[
    2(n-4)\sum_{k\geq 4}(R_{1k1k}+R_{2k2k}+R_{3k3k})>0.
    \]
    Adding these inequalities gives
    \[
    2(n-3)(R_{1212}+R_{2323}+R_{1313}+\sum_{i=1}^3\sum_{k=4}^nR_{ikik})>0.
    \]
    Finally, since $$\sum_{i=1}^3\sum_{j=i+1}^nR_{ijij}=R_{1212}+R_{1313}+R_{2323}+\sum_{i=1}^3\sum_{k=4}^nR_{ikik},$$ the claim follows directly.
\end{proof}

\end{document}